\numberwithin{equation}{section}
\DeclareMathOperator{\dist}{dist}
\DeclareMathOperator{\supp}{supp}
\DeclareMathOperator*{\Ent}{Ent}
\newtheorem{Thm}{Theorem}[section]
\newtheorem{theorem}[Thm]{Theorem}
\newtheorem{lemma}[Thm]{Lemma}
\newtheorem{corollary}[Thm]{Corollary}
\begin{document}
\sloppy

\title[Univalent functions in model spaces: revisited]{Univalent functions in model spaces: revisited}

\author[Baranov, Belov, Borichev, Fedorovskiy]{Anton Baranov, Yurii Belov,\\Alexander Borichev, Konstantin Fedorovskiy}

\address{Anton Baranov:
\newline Department of Mathematics and Mechanics, St.~Petersburg State University, St.~Petersburg, Russia;
\newline National Research University Higher School of Economics, St.~Petersburg, Russia
\newline {\tt anton.d.baranov@gmail.com}
\smallskip
\newline \indent Yurii Belov:
\newline St.~Petersburg State University, St.~Petersburg, Russia;
\newline Bauman Moscow State Technical University, Russia
\newline {\tt j\_b\_juri\_belov@mail.ru}
\smallskip
\newline \indent Alexander Borichev:
\newline Aix-Marseille Universit\'e, CNRS, Centrale Marseille, I2M, Marseille, France
\newline {\tt alexander.borichev@math.cnrs.fr}
\smallskip
\newline \indent Konstantin Fedorovskiy:
\newline Bauman Moscow State Technical University, Moscow, Russia;
\newline St.~Petersburg State University, St.~Petersburg, Russia
\newline {\tt kfedorovs@yandex.ru}
}
\thanks{The work was supported by the Russian Foundation for Basic
Research (the joint project 17-51-150005-NCNI-a of RFBR and CNRS, France, and
the project 16-01-00674), and by the Ministry of Education and Science of the
Russian Federation (projects 1.3843.2017 and 1.517.2016).}

\keywords{Univalent function, Model space, Carleson set, Nevanlinna domain}

\subjclass[2000]{Primary 30J15, Secondary 30C55}

\begin{abstract}
Motivated by a problem in approximation theory, we find a necessary and
sufficient condition for a model (backward shift invariant) subspace
$K_\varTheta = H^2\ominus \varTheta H^2$ of the Hardy space $H^2$ to
contain a bounded univalent function.
\end{abstract}

\maketitle

\section{Introduction}

A famous theorem of Beurling says that any closed linear subspace of the
Hardy space $H^2$ in the unit disc $\mathbb D=\{z\colon |z|<1\}$ which is
invariant with respect to the shift $f(z)\mapsto zf(z)$ is of the form
$\varTheta H^2$ for some inner function $\varTheta$. The backward shift
invariant subspaces
$$
K_\varTheta = H^2\ominus \varTheta H^2
$$
(also known as model spaces)  play an exceptionally important role in modern
analysis. For their numerous applications in function and operator theory
(including functional models and spectral theory) we refer to \cite{Nik}.

Recently an interesting link was established between the model space theory
and approximation theory. This link is related with the concept of a
\emph{Nevanlinna domain}. Recall that a bounded simply connected domain
$\Omega\subset\mathbb C$ is said to be a Nevanlinna domain if there exist
two functions $u,v\in H^{\infty}(\Omega)$ such that the equality
$\overline{z}=u(z)/v(z)$ holds almost everywhere on $\partial\Omega$ in the
sense of conformal mappings (see \cite[def.~2.1]{cfp02}). This is equivalent
to the fact that some (and hence every) conformal mapping from $\mathbb D$
onto $\Omega$ admits a pseudocontinuation, and hence belongs to some model
space $K_\varTheta$ (see \cite{bf, fed06} where the concept of a Nevanlinna
domain is studied). It was shown by the third author in
\cite[theorem~1]{fed96} (see also \cite[theorem~2.2]{cfp02}) that for a
simple closed curve $\varGamma$, the bianalytic polynomials (that is the functions
of the form $p(z)+\overline{z}q(z)$, where $p$ and $q$ are polynomials in $z$)
are dense in $C(\Gamma)$ if and only if the domain $\Omega$
bounded by $\varGamma$ is not a Nevanlinna domain. This result contrasts with
the classical Mergelyan theorem and shows that new analytic obstacles appear
in the case of uniform approximation by polyanalytic polynomials. For more
general approximation results for polyanalytic polynomials involving the
notion of a Nevanlinna domain see \cite{bcf16, cfp02, fed96} and the survey
\cite{mpf13}.

Thus, the existence of univalent functions (e.g., with some
special properties) in model spaces turns out to be a noteworthy problem. In this paper we
describe those inner functions $\varTheta$ for which $K_\varTheta$ contains a
bounded univalent function. This question is trivial if $\varTheta(z_0)=0$
for some $z_0\in\mathbb D$ since in this case the univalent function
$f(z)=1\big/(1-\overline{z}_0z)$ belongs to $K_\varTheta$. Notice that all
known specific examples of Nevanlinna domains (see \cite{bf, fed06, maz15})
are obtained as images of $\mathbb D$ under mappings by special univalent
functions belonging to model spaces generated by appropriate Blaschke
products.

However, in the case when $\varTheta$ is a pure singular inner function the
problem becomes nontrivial. An essential difficulty here is that we know explicitly
only few elements of the space $K_\varTheta$. In particular, the reproducing
kernels of this space,
$$
k_\lambda(z)=
\dfrac{1-\overline{\varTheta(\lambda)}\varTheta(z)}{1-\overline{\lambda}z},
\quad \lambda\in\mathbb D,
$$
cannot be univalent since $\Theta$ itself does not belong to the Dirichlet space.

Recall that given a finite positive Borel measure $\mu$ on the unit circle
$\mathbb T=\{z\colon |z|=1\}$ which is singular with respect to Lebesgue
measure on $\mathbb T$, the corresponding singular inner function $S_\mu$ is
defined by
\begin{equation}
\label{sinn}
S_\mu(z)=
\exp\bigg(-\int_{\mathbb T}\frac{\zeta+z}{\zeta-z}\,d\mu(\zeta)\bigg),
\quad z\in\mathbb D.
\end{equation}

The univalence problem in $K_\varTheta$ was already addressed in
\cite[Section~3]{bf} where it was shown that if $K_{S_\mu}$ contains a
bounded univalent function, then there exists a Carleson set (a (closed) set of finite
entropy, see the definition below) $E\subset\mathbb T$ such that $\mu(E)>0$.
By the classical results of L.~Carleson, sets of finite entropy are precisely
those subsets of the unit circle that may serve as zero sets of smooth (up to the boundary)
analytic functions in the unit disc. H.~S.~Shapiro \cite{sh} showed that if
$\mu$ is supported by a Carleson set, then $K_{S_\mu}$ contains functions
from $C^\infty(\overline{\mathbb D})$. K.~Dyakonov and D.~Khavinson \cite{dk}
showed that, conversely, if $K_{S_\mu}$ contains a mildly smooth function
(e.g., from the standard Dirichlet space in $\mathbb D$), then $\mu(E)>0$ for
some Carleson set $E\subset\mathbb T$ (whence the necessity of this condition
for the existence of bounded univalent functions).

On the positive side, it was shown in \cite{bf} that if $\mu(E)>0$ for some
Carleson set $E$, then for a certain ``symmetrization'' of $S_\mu$, the
corresponding model space contains univalent functions. In particular, there
exist univalent functions in the space generated by the simplest ``atomic''
inner function $S(z)=\exp\Big(\dfrac{z+1}{z-1}\Big)$ or, equivalently, in
the Paley--Wiener space $PW_{[0,1]}$, the Fourier image of $L^2[0,1]$,
considered as a space of functions in the upper half-plane $\mathbb C_+$.

Notice that $K_{\varTheta_1}\subset K_{\varTheta_2}$ whenever $\varTheta_1$
divides $\varTheta_2$ (in the class of all inner functions). Thus, if $\mu$
has atoms, then $K_{S_\mu}$ contains bounded univalent functions.

The present paper completes the study of this problem by showing that the
condition ``{\it $\mu(E)>0$ for some  Carleson set $E$}'' is sufficient for the
existence of bounded univalent functions in $K_{S_\mu}$.

A set $E\subset\mathbb T$ is called a \emph{Carleson set} (a \emph{Beurling--Carleson set}) or a \emph{set of
finite entropy} if
$$
\int_{\mathbb T}\log\dist(\zeta,E)\,dm(\zeta)>-\infty,
$$
where $m$ is the normalized Lebesgue measure on $\mathbb T$. In this case $m(E)=0$. Furthermore, if
$E\subset\mathbb T$ is a closed set, $m(E)=0$, and
$\{I_\ell\}$ is the (at most countable) set of disjoint open arcs
$I_\ell\subset\mathbb T$ such that $\mathbb T\setminus E=\bigsqcup_\ell
I_\ell$, then $E$ is a Carleson set if and only if
$$
\Ent(E)=\sum_\ell |I_\ell|\log\frac{1}{|I_\ell|}<\infty,
$$
where $|I|$ stands for $m(I)$, and $\bigsqcup$ is the disjoint union here and in what follows.
We call the quantity
$\Ent(E)$ the \emph{entropy} of $E$.

Our main result is the following theorem:

\begin{theorem}\label{mainth}
Let $S$ be a singular inner function and let $\mu$ be the corresponding
\textup(positive singular\textup) measure on $\mathbb T$. The following
conditions are equivalent.

$(i)$ The space $K_S$ contains bounded univalent functions.

$(ii)$ There exists a Carleson set $E\subset\mathbb T$ such that $\mu(E)>0$.
\end{theorem}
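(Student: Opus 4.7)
The plan is to prove $(ii)\Rightarrow(i)$; the converse is essentially the theorem of Dyakonov and Khavinson \cite{dk} cited in the introduction, since any bounded univalent $f\in H^2$ automatically lies in the standard Dirichlet space (indeed $\int_{\mathbb D}|f'|^2\,dA$ equals the area of $f(\mathbb D)$, which is finite for bounded $f$), so the presence of such an $f$ in $K_{S_\mu}$ forces $\mu$ to charge a Carleson set.

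My first step for $(ii)\Rightarrow(i)$ is a reduction. Given $\mu(E)>0$ for a Carleson set $E$, decompose $\mu=\mu|_E+\mu|_{\mathbb T\setminus E}$; the inner function $S_{\mu|_E}$ divides $S_\mu$, so by the monotonicity $K_{\varTheta_1}\subset K_{\varTheta_2}$ noted just before the theorem statement, it is enough to build a bounded univalent function in $K_{S_{\mu|_E}}$. We may therefore assume from the outset that $\mu$ is a positive singular measure supported on the Carleson set $E$, whose complementary arcs $\{I_\ell\}$ satisfy $\sum_\ell|I_\ell|\log(1/|I_\ell|)<\infty$.

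For the construction itself I would try to realize the required $f\in K_{S_\mu}$ as a bounded conformal map $\varphi\colon\mathbb D\to\Omega$ onto a Jordan domain $\Omega$ whose boundary is smooth off the image of $E$. The condition $\varphi\in K_{S_\mu}$ is equivalent to a pseudocontinuation identity $\varphi(\zeta)=\zeta S_\mu(\zeta)\overline{g(\zeta)}$ for a.e.\ $\zeta\in\mathbb T$ and some $g\in H^2$, i.e.\ to $\partial\Omega$ being a Nevanlinna-type curve whose conjugate equation has singular factor exactly $S_\mu$. My plan is to glue, over each complementary arc $I_\ell$, a smooth ``univalent boundary piece'' modelled on the explicit univalent function in $K_{S_{a\delta_\zeta}}$ produced in \cite{bf} for a single atom, interpolated between the pieces by means of the Beurling--Carleson/Shapiro \cite{sh} machinery that yields smooth outer functions vanishing sufficiently fast on $E$. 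In $K_{S_\mu}$--coordinates this amounts to writing $\varphi=F-S_\mu G$ with $F,G\in H^\infty$ outer, choosing $G$ so that $|S_\mu G|$ is dominated by a smooth weight adapted to the geometry of $E$, and tuning $F$ so that $\varphi$ sends $\mathbb T$ injectively onto a simple closed curve.

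The main obstacle I foresee is the verification of \emph{global univalence}: membership in $K_{S_\mu}$ is a linear pseudocontinuation constraint, but injectivity of $\varphi$ on $\mathbb D$ is a nonlinear global condition that can be destroyed by uncontrolled argument oscillation on small scales. The entropy bound $\Ent(E)<\infty$ must therefore be used quantitatively, to ensure that the geometric sizes of the boundary pieces over the different arcs $I_\ell$ are summable in a way that keeps $\varphi(\mathbb T)$ a Jordan curve while the jumps of $\arg S_\mu$ across the $I_\ell$ are absorbed by the regular part $F$. Once univalence is established, boundedness of $\varphi$ together with the built-in pseudocontinuation identity will place $\varphi$ in $K_{S_\mu}$, completing the proof.
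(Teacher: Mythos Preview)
Your treatment of $(i)\Rightarrow(ii)$ is fine and matches what the paper invokes: a bounded univalent function has finite Dirichlet integral, so Dyakonov--Khavinson applies. Your reduction to the case where $\mu$ is supported on a single Carleson set $E$ is also correct and is exactly how the paper begins.

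The gap is in the construction for $(ii)\Rightarrow(i)$. What you have written is a heuristic, not a proof: you propose to ``glue boundary pieces over the complementary arcs $I_\ell$'' and to write $\varphi=F-S_\mu G$ with outer $F,G$, but you give no concrete choice of $F,G$, no mechanism that forces $\varphi(\mathbb T)$ to be a simple closed curve, and no argument that the argument of $\varphi'$ or of $\varphi(\mathbb T)$ stays monotone. You correctly identify global univalence as the obstacle and then do not overcome it. The entropy condition does control sizes of complementary arcs, but it is far from clear how to convert this into injectivity of a map assembled from infinitely many local pieces; this is precisely the step where the problem was stuck before the paper.

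The paper avoids geometric gluing entirely, and in fact gives two quite different arguments. The first builds, via careful entropy estimates and a M\"obius renormalization, a smooth function $f=\sum c_n z^n\in K_{S_{\mu_3}}$ for an auxiliary measure $\mu_3$, with $|c_1|\ge\beta$ and $\sum_{j\ge1}(Mj+1)|c_{Mj+1}|<\beta$; a symmetrization $\widetilde f(z)=\sum c_{Mj+1}z^{Mj+1}$ then satisfies $\mathrm{Re}\,\widetilde f'>0$ on $\mathbb D$, which gives univalence for free, and a subsequent desymmetrization and inverse conformal change land the function back in $K_{S_\mu}$. The second proof is even more direct: take any nontrivial $f_0\in K_S\cap C^1(\overline{\mathbb D})$ (Shapiro / Dyakonov--Khavinson), use its Clark-measure representation to see that $f_0$ continues analytically across $\mathbb T\setminus\operatorname{supp}\mu$ and is unbounded outside $\overline{\mathbb D}$, pick $a$ with $1<|a|<2$ and $|f_0(a)|>100(\|f_0\|_\infty+\|f_0'\|_\infty)$, and set
\[
f(z)=\frac{1-f_0(z)/f_0(a)}{z-a}\in K_S.
\]
A one-line computation shows $f(z)=f(w)$ for $z\ne w$ in $\overline{\mathbb D}$ is impossible because each term in the resulting identity has modulus at most $1/10$. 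Either route replaces your unresolved global-injectivity problem with an explicit analytic criterion (positive real part of the derivative, or a quantitative inequality), which is the missing idea in your proposal.
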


An immediate corollary of Theorem~\ref{mainth} is

\begin{corollary}\label{hh}
A model space $K_\varTheta$ contains a bounded univalent function if and only
if either $\varTheta$ has a zero in $\mathbb D$ or $\varTheta$ is a singular
inner function such that the associated singular measure satisfies condition $(ii)$ of
Theorem~{\rm\ref{mainth}}.
\end{corollary}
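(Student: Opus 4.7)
The plan is to reduce Corollary~\ref{hh} to Theorem~\ref{mainth} together with the trivial observation already recorded in the introduction, namely that if $\varTheta(z_0)=0$ for some $z_0\in\mathbb D$, then $K_\varTheta$ contains the reproducing kernel $k_{z_0}(z)=1/(1-\overline{z}_0z)$, which is bounded and univalent in $\mathbb D$. The proof therefore splits into a sufficiency direction (handled separately for the ``zero'' alternative and the ``singular'' alternative) and a necessity direction, the latter resting on the canonical factorization of inner functions.

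For sufficiency, I would first treat the case when $\varTheta$ has a zero $z_0\in\mathbb D$ by exhibiting $k_{z_0}$ as above; since $|k_{z_0}(z)|\le 1/(1-|z_0|)$ and the map $z\mapsto 1/(1-\overline{z}_0z)$ is a composition of a Möbius automorphism with an inversion, it is bounded and univalent, and the identity $k_{z_0}(z)=\dfrac{1-\overline{\varTheta(z_0)}\varTheta(z)}{1-\overline{z}_0 z}$ at $z_0$ shows it lies in $K_\varTheta$. If instead $\varTheta$ is a pure singular inner function whose associated measure $\mu$ satisfies condition~(ii), then Theorem~\ref{mainth} applies verbatim and yields a bounded univalent function in $K_\varTheta$.

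For necessity, assume that $K_\varTheta$ contains a bounded univalent function $f$. If $\varTheta$ has a zero in $\mathbb D$, the first alternative already holds and there is nothing to prove. Otherwise, the canonical factorization $\varTheta= c\cdot B\cdot S_\mu$ (with $|c|=1$, $B$ a Blaschke product, and $S_\mu$ singular) forces $B\equiv1$, so $\varTheta = cS_\mu$ is singular up to a unimodular constant. Since multiplying $\varTheta$ by a unimodular constant does not affect the space $K_\varTheta$, we are in the hypothesis of Theorem~\ref{mainth}, which then gives the existence of a Carleson set $E$ with $\mu(E)>0$, i.e.\ condition~(ii).

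There is no real obstacle here beyond being careful with the factorization step and noting that the case of a nontrivial Blaschke factor is subsumed by the ``zero'' alternative, so that the two cases in the corollary cover all possibilities. The whole argument is essentially a one-paragraph deduction once Theorem~\ref{mainth} is in hand.
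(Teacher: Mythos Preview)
Your proposal is correct and matches the paper's approach: the paper gives no separate proof of the corollary, calling it an immediate consequence of Theorem~\ref{mainth}, and your argument is precisely the natural deduction the authors intend, using the observation from the introduction that $1/(1-\overline{z}_0z)\in K_\varTheta$ whenever $\varTheta(z_0)=0$. The only cosmetic point is that this kernel is univalent only for $z_0\ne 0$, but that is an issue with the statement (the degenerate case $\varTheta(z)=cz$) rather than with your reasoning.
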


We give two different proofs of Theorem~\ref{mainth}. The first one is based
on delicate estimates of entropy, which seem to be of independent interest.
The second proof is more straightforward.


\section{Preliminary observations}

Given a closed set $E\subset\mathbb T$ and an open arc $I$ we define the
\emph{local entropy} of $E$ with respect to $I$ by
$$
\Ent\nolimits_I(E)=\sum_\ell |I_\ell|\log\frac{1}{|I_\ell|},
$$
where $I_\ell$ are the open arcs such that $I\setminus E=\bigsqcup_{\ell}I_{\ell}$.

Note that for a set $E$ of zero Lebesgue measure (we will always consider only
such sets) we have $\sum_{\ell}|I_\ell|=|I|$ whence $\Ent_{I}(E)\ge|I|$ for
any arc $I$ with $|I|\le 1/e$. Also, there exists an absolute
constant $C>0$ such that
\begin{equation}\label{ess}
\int_I |\log\dist(\zeta,E)|\,dm(\zeta)\le C\Ent\nolimits_I(E),
\end{equation}
when $\sup_{\ell}|I_\ell|\le 1/e$ (the reverse inequality always holds with
constant $1$ when $E\cap I\not=\emptyset$).

In what follows, for $\gamma_1,\gamma_2\in\mathbb T$, we denote by
$[\gamma_1,\gamma_2]$ the arc of $\mathbb T$ with endpoints
$\gamma_1,\gamma_2$ in the positive (counter clockwise) direction.

The following lemma deals with the existence of smooth functions in $K_{S_\mu}$ with
uniform control on coefficients and plays the crucial role in our construction.

\begin{lemma}\label{key}
There exist absolute constants $\beta>0$, $\varepsilon\in(0,1/e)$ and $M\in\mathbb N$ such that
for every singular probability measure $\mu$ supported by a closed set $E\subset I$ for an arc $I$ with
$\Ent_I(E)\le\varepsilon$, there exists
a function $f\in K_{S_\mu}\cap C^3(\overline{\mathbb D})$ such that
$f(z)=\sum_{n\ge0}c_n z^n$,
\begin{equation}\label{proizv}
|c_1|\ge\beta
\end{equation}
and
\begin{equation}\label{coef}
\sum_{j=1}^{\infty}|c_{Mj+1}|(Mj+1)<\beta.
\end{equation}
\end{lemma}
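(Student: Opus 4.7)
The plan is to obtain $f$ as the orthogonal projection onto $K_{S_\mu}$ of a suitable smooth outer function adapted to the set $E$. This is essentially a quantitative refinement of Shapiro's construction of $C^\infty$ elements of $K_{S_\mu}$.

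First, by a Korenblum / Taylor--Williams-type argument, I would construct an outer function $\Phi\in C^3(\overline{\mathbb D})$ of the form $\Phi(z)=zG(z)$, with $G$ smooth outer and $G(0)=1$, such that $|\Phi(\zeta)|$ vanishes to very high order on $E$ while $\|\Phi-z\|_{C^3(\overline{\mathbb D})}$ is small and controlled by $\Ent_I(E)$. The smallness of $\Ent_I(E)\le\varepsilon$ is used both to guarantee that $|E|$ is small (so that $\Phi$ can be close to $z$) and to secure uniform $C^3$-control on $\Phi$. Then I would set $f:=P_{K_{S_\mu}}\Phi = \Phi - S_\mu P_+(\overline{S_\mu}\,\Phi)$, where $P_+$ is the Riesz projection onto $H^2$; by construction $f\in K_{S_\mu}$. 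For $f\in C^3(\overline{\mathbb D})$, the key point is that the vanishing of $|\Phi|$ to high order on the singular locus $E$ of $\overline{S_\mu}$ makes $\overline{S_\mu}\,\Phi$ smooth on $\mathbb T$, and an argument combining boundedness of the Riesz projection in appropriate Besov/Sobolev classes with the cancellation visible in the formula for $f$ should yield uniform $\|f\|_{C^3}\le C$.

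The Taylor coefficient bounds then follow. For $c_1=f'(0)$: a direct computation gives $c_1(P_{K_{S_\mu}}z)=1-|S_\mu(0)|^2-|S_\mu'(0)|^2\ge 1-5e^{-2}>0$, since $|S_\mu(0)|^2=e^{-2}$ and $|S_\mu'(0)|^2\le 4e^{-2}$ for the probability measure $\mu$. Because $\Phi$ is close to $z$ in $H^2$, the coefficient $c_1=c_1(P_{K_{S_\mu}}\Phi)$ stays bounded below by an absolute $\beta>0$. For the sparse tail: since $\|f\|_{C^3}$ is uniformly bounded, integration by parts gives $\sum_{n\ge M+1}n|c_n|\lesssim \|f\|_{C^3}/M$, and $\sum_{j\ge 1}(Mj+1)|c_{Mj+1}|$ is dominated by this sum; hence choosing $M$ a sufficiently large absolute constant makes the tail smaller than $\beta$.

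The main obstacle is the quantitative control in the first step: one needs a version of the Korenblum / Taylor--Williams construction which, uniformly in $\mu$ and $E$ (subject to $\Ent_I(E)\le\varepsilon$), produces an outer $\Phi$ with all three properties --- vanishing to high order on $E$, closeness to $z$ in $C^3$, and ensuring uniform $C^3$-smoothness of $\overline{S_\mu}\Phi$ on $\mathbb T$. Balancing these requirements, and tracking the absolute constants through the Riesz projection so that $\beta$, $\varepsilon$, and $M$ can be chosen independently of $\mu$, $I$, and $E$, is the main technical point.
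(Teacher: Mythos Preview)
Your outline has the right overall shape (build a smooth outer function adapted to $E$, push it into $K_{S_\mu}$, get a uniform $C^3$ bound, then pick $M$ large), but two of the key claims fail as stated.

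First, the requirement ``$\Phi$ vanishes to very high order on $E$'' is flatly incompatible with ``$\|\Phi-z\|_{C^3(\overline{\mathbb D})}$ is small'': at every point $e\in E\subset\mathbb T$ one has $|z|=1$ but $\Phi(e)=0$, so already $\|\Phi-z\|_{C^0}\ge 1$. Closeness to $z$ in $H^2$ would suffice for the $c_1$ estimate (your nice computation of $c_1(P_{K_{S_\mu}}z)=1-|S_\mu(0)|^2-|S_\mu'(0)|^2$ survives that weakening), but you invoke $C^3$-closeness, which is impossible.

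Second, and more seriously, the orthogonal projection $P_{K_{S_\mu}}$ does \emph{not} send smooth functions to smooth functions, even when the input vanishes on $E$. In the formula $f=\Phi-S_\mu\,g_+$ with $g_+=P_+(\overline{S_\mu}\Phi)$, the function $g_+$ is smooth (since $\overline{S_\mu}\Phi$ is), but there is no mechanism forcing $g_+$ to vanish on $E$; the Riesz projection does not preserve local vanishing. Hence $S_\mu g_+$ inherits the essential-singularity oscillation of $S_\mu$ on $E$, and $f$ is not even continuous there. A clean test case: with $\mu=\delta_1$ one has $P_{K_{S_0}}1=1-e^{-1}S_0$, which is discontinuous at $1$; replacing $1$ by a polynomial vanishing at $1$ does not cure this. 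So the ``cancellation visible in the formula for $f$'' that you appeal to is not there.

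The paper avoids both problems by \emph{not} using $P_{K_{S_\mu}}$. It takes the Carleson outer function $F$ with $|F|=\dist(\cdot,E)^k$ on $\mathbb T$ (for a fixed absolute $k\ge 10$) and sets
\[
f:=P_+\big(\overline{z}\,\overline{F}\,S_\mu\big).
\]
Then $\overline{z}\,\overline{F}\,S_\mu\in C^4(\mathbb T)$ directly (the decay of $F$ kills the singularities of $S_\mu$), so $f\in C^3$ with a uniform norm bound. Membership in $K_{S_\mu}$ follows from the criterion $\overline{z}\overline{f}S_\mu\in H^2$, not from an orthogonal projection. The lower bound $|c_1|\ge\beta$ is obtained by a perturbation argument of a different flavor from yours: when $\Ent_I(E)\le\varepsilon$ is small, one shows $F(z)\approx(1-z)^k$ and $S_\mu(z)\approx S_0(z)$ uniformly away from a tiny arc, so $c_1$ is close to the fixed nonzero number $\int_{\mathbb T}\overline{z}^2(1-\overline z)^kS_0\,dm$. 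Your identity for $c_1(P_{K_{S_\mu}}z)$ is elegant but is attached to the wrong construction.
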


\begin{proof}
Without loss of generality we assume that $I =[1,e^{i\eta}]$. 
Note that $\eta<\varepsilon$ since $|I|\le\Ent_I(E)<\varepsilon$.

Let $S_0=\exp\Big(\dfrac{z+1}{z-1}\Big)$ be the atomic singular function corresponding to the
unit mass at the point $1$.
We begin by fixing a positive integer $k\ge10$ such that
\begin{equation}
\label{bedef}
\beta:=\frac12
\Big|\int_{\mathbb T}\overline{z}^2(1-\overline{z})^kS_0(z)\,dm(z)\Big|>0.
\end{equation}
Such choice of $k$ is possible; otherwise, $z^n(1-z)^{10}\perp S_0$, $n\ge 2$ and, hence,
$z^n\perp S_0$, $n\ge 2$, which is absurd.

Next, following the classical Carleson approach, let $F(z)$ be the outer
function such that $|F(z)|=\big(\dist(z,E)\big)^k$ a.e. on $\mathbb T$ (this
is possible since the function $z\mapsto \log\dist(z,E)$ is summable on the unit circle), i.e.,
$$
F(z)=\exp\Big(k\int_{\mathbb T}
\frac{\zeta+z}{\zeta-z}\log\dist(\zeta,E)\,dm(\zeta)\Big),
\quad z\in\mathbb D.
$$
The function $F$ is at least in the class $C^{[k/2]}(\overline{\mathbb
D})$. Indeed, it can be shown (see, e.g., \cite[Section 1]{carl}) that for $0\le j\le [k/2]$ we have
$$
|F^{(j)}(z)|\le C\big(\dist(z,E)\big)^{k-2j}, \quad z\in\overline{\mathbb D},
$$
where the value $C$ depends only on $k$ and $\Ent(E)$. More precisely,
if $\Ent(E)\le 1$, then
$C\le C_1(k)$. 
By the hypothesis
of the lemma, for fixed absolute $k$
this $C$ is an absolute constant.

Since $k\ge 10$ and $|S_\mu^{(j)}|\le C_2(\dist(z,E))^{-2j}$ for $0\le j\le 4$ and for some absolute constant
$C_2$, we conclude that $\overline{z}\overline{F}S_\mu\in C^4(\mathbb T)$. 
Denote by $P_+$ the orthogonal projector from $L^2(\mathbb T)$ to
$H^2$. Then we have
$$
f:=P_+(\overline{z}\overline{F}S_\mu)\in H^2\cap C^3(\mathbb T)
$$
and $\|f\|_{C^3(\mathbb T)}\le B$ for some absolute constant $B$.
Set
$$
f(z)=\sum_{j\ge 0}c_jz^j.
$$
Then
$\sum_{j\ge1}|c_j|^2j^6\le B^2$, and, finally,
\begin{equation}\label{fast}
\sum_{j\ge1}j^2|c_j|\le B_1
\end{equation}
for another absolute constant $B_1$.

Now we show that in the conditions of Lemma, for sufficiently small
$\varepsilon>0$ we have
\begin{equation}
|c_1|=|f'(0)|=
\bigg|\int_{\mathbb T}\overline{z}^2\overline{F(z)}S_\mu(z)\,dm(z)\bigg|\ge\beta.
\label{newst}
\end{equation}
Then, choosing $M=B_1/\beta$ we deduce \eqref{coef} from
\eqref{fast}.

Let $\beta'=2^{-k-3}\beta$. First note that
\begin{equation}\label{rty}
\left\{
\begin{aligned}
\Big|\int_{[e^{-i\beta'},e^{i\beta'}]}
\overline{z}^2\overline{F(z)}S_\mu(z)\,dm(z)\Big|&\le\frac{\beta}{4}, \\
\Big|\int_{[e^{-i\beta'},e^{i\beta'}]}
\overline{z}^2(1-\overline{z})^kS_0(z)\,dm(z)\Big|&\le\frac{\beta}{4},
\end{aligned}\right.
\end{equation}
since $|F(z)|\le 2^k$, $|1-z|^k\le 2^k$ on $\mathbb T$ and the moduli of
other factors are bounded by $1$.

Next we show that for sufficiently small $\varepsilon$ and for $z\in \mathbb T\setminus [e^{-i\beta'},e^{i\beta'}]$ we have
\begin{equation}\label{out}
F(z)=(1-z)^{k}\big(1+O(\varepsilon^{1/3})\big),
\end{equation}
\begin{equation}
\label{inn}
S_\mu(z)=S_0(z)\big(1+O(\varepsilon)\big),
\end{equation}
where the constants involved in the $O$-estimates are determined by $\beta'$ and do
not depend on $z,I$ and $E$ provided that $\Ent_I(E)<\varepsilon$. Once these
estimates are established, \eqref{newst} follows immediately from \eqref{bedef},
\eqref{rty} and from the estimate
\begin{gather*}
\Big|\int_{\mathbb T\setminus[e^{-i\beta'},e^{i\beta'}]}
\overline{z}^2\overline{F(z)}S_\mu(z)\,dm(z)\Big|=\\
\Big|\int_{\mathbb T\setminus[e^{-i\beta'},e^{i\beta'}]}
\overline{z}^2(1-\overline{z})^kS_0(z)(1+O(\varepsilon^{1/3}))\,dm(z)\Big|\\
\ge 2\beta-\frac{\beta}{4}-O(\varepsilon^{1/3})>\frac{3\beta}{2},
\end{gather*}
if $\varepsilon$ is sufficiently small.

\medskip\noindent
\textbf{Proof of \eqref{out}.} Put $\delta=\varepsilon^{1/3}$. We assume from
the very beginning that $\varepsilon$ is so small that $\delta<\beta'/10$. We
have
$$
\frac{F(z)}{(1-z)^k}=\exp\Big(k\int_{\mathbb T}\frac{\zeta+z}{\zeta-z}
\log\frac{\dist(\zeta,E)}{|1-\zeta|}\,dm(\zeta)\Big).
$$

Recall that $E\subset I=[1,e^{i\eta}]$ and $\eta<\varepsilon$. Then, for
$\zeta\in\mathbb T\setminus[e^{-i\delta},e^{i\delta}]$, we have
$$
\Bigl|\frac{\dist(\zeta,E)}{|1-\zeta|}-1\Bigr|
=1+O\Big(\frac\eta\delta\Big).
$$
Hence,
$$
\Big|\int_{\mathbb T\setminus[e^{-i\delta},e^{i\delta}]\setminus [ze^{-i\delta},ze^{i\delta}]}
\frac{\zeta+z}{\zeta-z}\log\frac{\dist(\zeta,E)}{|1-\zeta|}\,dm(\zeta)\Big|=
O\Big(\frac\eta{\delta^2}\Big)=O(\varepsilon^{1/3}).
$$

For $\zeta\in [e^{-i\delta},e^{i\delta}]$, $z\in \mathbb T\setminus [e^{-i\beta'},e^{i\beta'}]$ we have $|\zeta-z|\ge\delta$ and, by a direct
estimate of the Schwarz kernel we get
\begin{multline*}
\Big|\int_{[e^{-i\delta},e^{i\delta}]}
\frac{\zeta+z}{\zeta-z}\log\frac{\dist(\zeta,E)}{|1-\zeta|}\,dm(\zeta)\Big|
\\ \le\frac{2}{\delta}\Big(\int_I|\log\dist(\zeta,E)|\,dm(\zeta)
+\int_I\log\frac{1}{|1-\zeta|}\,dm(\zeta)\\
+\int_{[e^{i\eta},e^{i\delta}]}\log\frac{|\zeta-1|}{|\zeta-e^{i\eta}|}
\,dm(\zeta)\Big)\\
\le\frac{2C}{\delta}\Ent\nolimits_I(E)+O\bigg(\eta\log\frac{1}{\eta}\bigg).
\end{multline*}
In the last inequality we use estimate \eqref{ess}. By the hypothesis
$\Ent_I(E)<\varepsilon$, we conclude that the whole integral is
$O(\varepsilon^{2/3})$.

Finally, to estimate the integral over the arc $J=[ze^{-i\delta},ze^{i\delta}]$, we use the following simple estimate: for any function
$\psi$ which is in $C^1$ on $J$ we have
\begin{multline*}
\Big|\int_J \frac{\zeta+z}{\zeta-z}\psi(\zeta)\,dm(\zeta)\Big|\\ \le
\int_J |\psi(\zeta)|\,dm(\zeta)
+2\Big|\int_J\frac{\psi(z)}{\zeta-z}\,dm(\zeta)\Big|
+2\Big|\int_J \frac{\psi(\zeta)-\psi(z)}{\zeta-z}
\,dm(\zeta)\Big|\\ \le
C\delta\big(\max_J |\psi|+|\psi(z)|
+\max_J |\psi'|\big)
\end{multline*}
for some absolute constant $C$.
We apply this estimate to
$$
\psi(\zeta)=\log\frac{\dist(\zeta,E)}{|1-\zeta|}.
$$
Since $|z-1|\ge\beta'$ and $\delta <\beta'/10$, we have $|\psi(\zeta)|\le
C\eta/\beta'$ and $|\psi'(\zeta)|\le C\eta/(\beta')^2$ when
$\zeta\in J$, for some absolute constant $C$. We conclude that the integral over $J$ is $O(\varepsilon^{4/3})$.
\medskip

\noindent
\textbf{Proof of \eqref{inn}.} The estimate for the inner factor is even more
straightforward. Using the fact that $\mu(I)=\mu(\mathbb T)=1$ we can write
\begin{gather*}
\frac{S_\mu(z)}{S_0(z)}=\exp\Big(\int_{\mathbb T}\Big(\frac{1+z}{1-z}-\frac{\zeta+z}{\zeta-z}\Big)
\,d\mu(\zeta)\Big)\\
=\exp\Big(\int_{I}\frac{2z(\zeta-1)}{(1-z)(\zeta-z)}\,d\mu(\zeta)\Big).
\end{gather*}
For every $\zeta\in I$ we have $|1-\zeta|\le\eta<\varepsilon$,
while $|\zeta-z|\ge\beta'/2$. Thus, $S_\mu(z)=S_0(z)\exp(O(\varepsilon))$.
\end{proof}

\begin{lemma}\label{spl}
Let $\mu$ be a non-trivial continuous singular measure supported by a closed set $E$ of finite
entropy. Then for any $\varepsilon,\delta>0$ there exists an arc $I$ such
that $0<\mu(I)<\delta$ and $\Ent_I(E)/\mu(I)<\varepsilon$.
\end{lemma}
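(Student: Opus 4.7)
The plan is to exploit the absolute convergence $\sum_\ell |I_\ell|\log(1/|I_\ell|)<\infty$: essentially all the entropy of $E$ is concentrated in finitely many complementary arcs, so after removing those the remaining entropy budget is tiny, and it can then be distributed across a partition matched to the $\mu$-mass.

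Given $\varepsilon,\delta>0$, I would first choose a finite family $F$ of complementary arcs of $\mathbb T\setminus E$ such that
$$
\sum_{\ell\notin F}|I_\ell|\log\frac{1}{|I_\ell|}<\varepsilon\,\mu(\mathbb T).
$$
Writing $K=\mathbb T\setminus\bigcup_{\ell\in F}I_\ell=K_1\sqcup\cdots\sqcup K_n$, each $K_i$ is a closed arc whose endpoints lie in $E$. Because $\mu$ is continuous, I can cut each $K_i$ at finitely many points so that every resulting sub-arc has $\mu$-mass less than $\delta$; if a cut point happens to fall in a complementary arc $I_\ell$ with $\ell\notin F$, I slide it to the endpoint of $I_\ell$ lying in $E$. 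This sliding preserves every $\mu$-mass because $\mu$ is supported on $E$ and hence vanishes on each complementary arc. The outcome is a partition $K=J_1\sqcup\cdots\sqcup J_N$ into arcs whose endpoints lie in $E$ and whose $\mu$-masses are each below $\delta$.

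Because every $J_j$ has endpoints in $E$, each $I_\ell$ with $\ell\notin F$ lies entirely in a single $J_j$. Therefore
$$
\sum_j\Ent\nolimits_{J_j}(E)=\sum_{\ell\notin F}|I_\ell|\log\frac{1}{|I_\ell|}<\varepsilon\,\mu(\mathbb T)=\varepsilon\sum_j\mu(J_j),
$$
so some index $j^\ast$ satisfies $\Ent\nolimits_{J_{j^\ast}}(E)<\varepsilon\mu(J_{j^\ast})$. This forces $\mu(J_{j^\ast})>0$, and by construction $\mu(J_{j^\ast})<\delta$, giving the arc $I=J_{j^\ast}$ required by the lemma.

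The only delicate point is the partitioning step: cut points must simultaneously land in $E$ and yield pieces of small $\mu$-mass. These two constraints are reconciled precisely by the fact that $\mu$ vanishes on every complementary arc, which allows any cut point to be slid freely within its complementary arc without altering any $\mu$-mass.
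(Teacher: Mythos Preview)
Your argument is correct and follows essentially the same strategy as the paper: discard finitely many complementary arcs so that the residual entropy is small compared to the total $\mu$-mass, then use pigeonhole over the remaining closed arcs. The only difference is the order of operations---the paper first picks an arc $I$ with $0<\mu(I)<\delta$ and only then removes finitely many complementary arcs, so that the resulting pieces $J_\ell$ automatically have mass below $\delta$ and your extra cutting-and-sliding step becomes unnecessary.
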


\begin{proof}
Choose an open arc $I$ such that $0<\mu(I)<\delta$. Let $I\setminus E =
\bigcup_{j\ge 1} I_j$ with disjoint open arcs $I_j$. Choose $N$ such that
$$
\sum_{j=N+1}^\infty |I_j|\log\frac{1}{|I_j|}<\varepsilon \mu(I)
$$
and let $\overline{I}\setminus\bigcup_{j=1}^N {I_j}=\bigsqcup_{\ell=1}^{L}{J_\ell}$, where
$J_\ell$ are (closed) arcs. Assume that for any $1\le \ell\le L$ we have
$\Ent_{J_\ell}(E\cap J_\ell)\ge\varepsilon\mu(J_\ell)$. 
Then
$$
\sum_{j=N+1}^\infty |I_j|\log\frac{1}{|I_j|}=
\sum_{\ell=1}^L \Ent\nolimits_{J_\ell}(E\cap J_\ell)\ge
\varepsilon\sum_{\ell=1}^L \mu(J_\ell)=\varepsilon \mu(I),
$$
a contradiction. It remains to set $I=J_\ell$ for one of $J_\ell$ such that $\Ent_{J_\ell}(E\cap J_\ell)<\varepsilon\mu(J_\ell)$.
\end{proof}

Given $a\in\mathbb D$, consider the M\"obius transformation $\varphi_a:\mathbb D\to\mathbb D$,
$$
\varphi_a(z)=\frac{z-a}{1-\overline{a}z}.
$$

\begin{lemma}\label{mob}
Let $S=S_\mu$ be a singular inner function with $\supp(\mu)=E\subset I$,
where $I$ is an arc with endpoint $1$ and $|I|<1/100$. Let $r\in(9/10,1)$ be
such that $1-r>10|I|$. Put $\widetilde{S}=S\circ\varphi_{-r}$, the
composition of $S$ with the M\"obius transformation $\varphi_{-r}$. Then
\begin{enumerate}
\item[(i)] $\widetilde{S}$ is a singular inner function and the corresponding
singular measure $\widetilde\mu$ satisfies
\begin{equation}\label{ui}
\frac{\mu(\mathbb T)}{1-r}\le\widetilde\mu(\mathbb T)=\int_I\frac{1-r^2}{|\zeta-r|^2}\,d\mu(\zeta)\le
\frac{3\mu(\mathbb T)}{1-r}.
\end{equation}
\item[(ii)] There exists an arc $\widetilde{I}$ with endpoint $1$ such that
$\widetilde{E}:=\supp(\widetilde\mu)\subset\widetilde{I}$ and
$$
|\widetilde{I}|\le\frac{4|I|}{1-r},
\qquad
\Ent\nolimits_{\widetilde{I}}(\widetilde{E})\le
\frac{4}{1-r}\Ent\nolimits_{I}(E).
$$
\end{enumerate}
\end{lemma}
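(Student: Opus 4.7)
The plan is to view $\widetilde S$ as a composition with a disk automorphism, pull the measure back explicitly via the boundary, and then estimate using the hypothesis $1-r>10|I|$, which keeps $\supp\mu$ in a region where $|\zeta-r|$ is comparable to $1-r$ with small distortion.

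\textbf{Part (i).} Since $\varphi_{-r}$ is a disk automorphism (hence inner) and $\widetilde S=S\circ\varphi_{-r}$ is zero-free, $\widetilde S$ is singular inner. To identify $\widetilde\mu$, I start from $\log|\widetilde S(z)|=-\int_{\mathbb T}P(\varphi_{-r}(z),\eta)\,d\mu(\eta)$ and apply the standard conformal identity
\[
P(\varphi_{-r}(z),\varphi_{-r}(\zeta))=P(z,\zeta)\,\frac{|1+r\zeta|^2}{1-r^2}
\]
together with the substitution $\zeta=\varphi_r(\eta)$. Using $1+r\varphi_r(\eta)=(1-r^2)/(1-r\eta)$ and $|1-r\eta|=|\eta-r|$ for $\eta\in\mathbb T$, this produces the central identity $\widetilde\mu(\mathbb T)=\int_I(1-r^2)/|\zeta-r|^2\,d\mu(\zeta)$. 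For $\zeta\in I$, the triangle inequality together with $|\zeta-1|\le|I|\le(1-r)/10$ gives $(9/10)(1-r)\le|\zeta-r|\le(11/10)(1-r)$, and combined with $1+r\in[19/10,2]$ this pinches the integrand between $(1.9\cdot 100/121)/(1-r)>1/(1-r)$ and $(200/81)/(1-r)<3/(1-r)$, yielding (\ref{ui}).

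\textbf{Part (ii).} Set $\widetilde I:=\varphi_r(I)$; since $\varphi_r(1)=1$ and $\varphi_r$ is an orientation-preserving homeomorphism of $\mathbb T$, $\widetilde I$ is an arc with endpoint $1$, and the description of $\widetilde\mu$ from part (i) shows that $\widetilde E=\varphi_r(E)\subset\widetilde I$. The same bounds on $|\zeta-r|$ give $|\varphi_r'(\zeta)|=(1-r^2)/|\zeta-r|^2\in[c_1/(1-r),c_2/(1-r)]$ on $I$ with absolute constants $c_1>3/2$ and $c_2<5/2$; integrating in $\zeta$ yields $|\widetilde I|\le c_2|I|/(1-r)<4|I|/(1-r)$. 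The complementary arcs of $\widetilde E$ in $\widetilde I$ are exactly the images $\varphi_r(I_\ell)$ of the complementary arcs $I_\ell$ of $E$ in $I$, and since $c_1/(1-r)>15$ on the parameter range we have $|\varphi_r(I_\ell)|\ge|I_\ell|$, so $\log(1/|\varphi_r(I_\ell)|)\le\log(1/|I_\ell|)$. Summing,
\[
\Ent\nolimits_{\widetilde I}(\widetilde E)\le\frac{c_2}{1-r}\sum_\ell|I_\ell|\log\frac{1}{|I_\ell|}<\frac{4}{1-r}\Ent\nolimits_I(E).
\]

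No step presents a genuine difficulty; the only delicacy is numerical, namely that $c_2$ must fit under $4$ after dividing by $1-r$, and the $\log$-bound step must go through. The assumption $1-r>10|I|$ was built precisely so that $|\zeta-r|$ lies in a $\pm 10\%$ band around $1-r$, which leaves ample room for the final constants.
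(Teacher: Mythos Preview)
Your argument is correct and follows essentially the same approach as the paper: both identify $\widetilde\mu(\mathbb T)$ via the Poisson representation (the paper does this just at $z=0$ via $|\widetilde S(0)|=|S(r)|$, you via the full conformal Poisson identity), then use the key estimate $9(1-r)/10\le|\zeta-r|\le11(1-r)/10$ for $\zeta\in I$; for part (ii) both set $\widetilde I=\varphi_r(I)$ and use $|J|\le|\varphi_r(J)|\le\frac{4}{1-r}|J|$ for $J\subset I$ to control length and entropy. Your version simply makes explicit the derivative computation and the numerical checks that the paper summarizes as ``simple estimates.''
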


\begin{proof}
Clearly, $\widetilde{S}$ is an inner function which does not vanish in
$\mathbb D$. Therefore, $\widetilde{S}=S_{\widetilde\mu}$ for some singular measure $\widetilde\mu$.
We have $\exp(-\widetilde\mu(\mathbb
T))=|\widetilde{S}(0)|=|S(r)|$ and hence
$$
\widetilde\mu(\mathbb T)=\int_I\frac{1-r^2}{|\zeta-r|^2}\,d\mu(\zeta).
$$
Since $|I|<(1-r)/10$, we have $9(1-r)/10\le|\zeta-r|\le 11(1-r)/10$ for
$\zeta\in I$ and the estimate \eqref{ui} follows.

Since $\varphi_r$ is the inverse to $\varphi_{-r}$, we conclude that
$\widetilde\mu$ is supported by
$\widetilde{E}=\varphi_r(E)\subset\widetilde{I}=\varphi_r(I)$. Simple
estimates of $\varphi_r$ show that we have
$$
|J|\le|\varphi_r(J)|\le\frac{4}{1-r}|J|
$$
for any arc $J\subset I$. Hence, the local entropy also
increases at most by the factor $4(1-r)^{-1}$.
\end{proof}

\begin{lemma}\label{conf}
Let $\varTheta$ be an inner function and let $a\in\mathbb D$, $a\ne0$, be
such that $\varTheta(-a)\ne0$. Define
$\widetilde\varTheta=\varTheta\circ\varphi_a$. Let $f\in K_\varTheta$ and let
$g=f\circ\varphi_a$. Then there exists 
$c_f\in\mathbb C$ such that
$g-c_f\in K_{\widetilde\varTheta}$.
\end{lemma}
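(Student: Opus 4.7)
\emph{Plan of proof.} I would work from the boundary characterization of model spaces: a function $f\in H^2$ lies in $K_\varTheta$ if and only if there exists $h\in H^2$ with $f(\zeta)=\varTheta(\zeta)\,\overline{\zeta\,h(\zeta)}$ for a.e.\ $\zeta\in\mathbb T$ (this is just a rewriting of the orthogonality condition $\overline\varTheta f\in\overline{zH^2}$ on $\mathbb T$). I would fix such an $h$ associated with the given $f\in K_\varTheta$.

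Composing the boundary identity for $f$ with $\varphi_a$ and using that $\overline{\varphi_a(\zeta)}=\varphi_a(\zeta)^{-1}$ on $\mathbb T$ yields
$$g(\zeta)=\widetilde\varTheta(\zeta)\,\varphi_a(\zeta)^{-1}\,\overline{(h\circ\varphi_a)(\zeta)}\qquad\text{a.e.\ on }\mathbb T.$$
The same characterization applied to $\widetilde\varTheta$ says that $g-c_f\in K_{\widetilde\varTheta}$ exactly when there exists $\tilde h\in H^2$ with $g(\zeta)-c_f=\widetilde\varTheta(\zeta)\,\overline{\zeta\,\tilde h(\zeta)}$ on $\mathbb T$. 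Solving this equation on the boundary (dividing by $\widetilde\varTheta$, which has unit modulus on $\mathbb T$, then multiplying by $\zeta$ and conjugating) produces the explicit candidate
$$\tilde h(\zeta)=\zeta^{-1}\bigl[\varphi_a(\zeta)\,(h\circ\varphi_a)(\zeta)-\overline{c_f}\,\widetilde\varTheta(\zeta)\bigr].$$

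The bracketed expression is the boundary value of an $H^2$ function, since composition with a disk automorphism is bounded on $H^2$, $\varphi_a\in H^\infty$, and $\widetilde\varTheta$ is inner. Hence $\tilde h$ lies in $H^2$ if and only if the bracket vanishes at $z=0$, which is the single scalar equation $-a\,h(-a)-\overline{c_f}\,\varTheta(-a)=0$. The hypothesis $\varTheta(-a)\ne 0$ enters exactly here: it lets me solve uniquely for $c_f$, obtaining $c_f=-\overline{a}\,\overline{h(-a)}/\overline{\varTheta(-a)}$. With this choice, $\tilde h\in H^2$ and the boundary identity rearranges to $g-c_f=\widetilde\varTheta\,\overline{\zeta\,\tilde h}$, placing $g-c_f$ in $K_{\widetilde\varTheta}$.

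The only point requiring real care is the bookkeeping between boundary identities on $\mathbb T$ (where $\overline{\varphi_a}=\varphi_a^{-1}$ and $\overline{\widetilde\varTheta}=\widetilde\varTheta^{-1}$) and the $H^2$-membership of the resulting analytic extension to $\mathbb D$. The condition $\varTheta(-a)\ne 0$ is essentially sharp in this argument: the origin $0\in\mathbb D$ is moved to $-a$ by $\varphi_a$, and the single obstruction to landing inside $K_{\widetilde\varTheta}$ after composition is located precisely at $z=0$, where it is absorbed by the free constant $c_f$.
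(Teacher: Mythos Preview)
Your argument is correct and is essentially the same as the paper's: both use the boundary criterion $f\in K_\varTheta\Leftrightarrow\overline{z}\,\overline{f}\,\varTheta\in H^2$ (you phrase it as $f=\varTheta\,\overline{zh}$ on $\mathbb T$ for some $h\in H^2$), compose with $\varphi_a$, and reduce the membership $g-c_f\in K_{\widetilde\varTheta}$ to a single vanishing condition at $z=0$ that determines $c_f$ via $\widetilde\varTheta(0)=\varTheta(-a)\ne0$. Your $h$ and the paper's auxiliary function differ only by the change of variable $\varphi_a$, and the resulting formulas for $c_f$ coincide.
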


\begin{proof}
In the proof we use the following criterion of being in
$K_\varTheta$ (see, e.g., \cite[Lecture~II]{Nikshift}): for a function $f\in
H^2$,
$$
f\in K_\varTheta\ \Longleftrightarrow\ \overline{z} \overline{f}\varTheta\in H^2,
$$
where the latter inclusion means that the function
$\overline{z}\overline{f}\varTheta$ defined on $\mathbb T$ coincides a.e.\ with some element of $H^2$.

Since $f\in K_\varTheta$ we have $\overline{z}\overline{f}\varTheta\in H^2$.
We take the composition with $\varphi_a$ on the right and denote
$$
h(z)=\overline{\Bigl(\frac{z-a}{1-\overline{a}z}\Bigr)}\overline{g(z)}
\widetilde\varTheta(z).
$$
Then $h\in H^2$.
Set $d_f=-ah(0)/\widetilde\varTheta(0)$ (note that
$\widetilde\varTheta(0)=\varTheta(-a)\ne0$). Clearly, $g-\overline{d_f}\in H^2$ and it
remains to show that $\overline{z}(\overline{g}-d_f)\widetilde\varTheta\in
H^2$. Indeed, for $z\in\mathbb T$,
$$
\overline{z}(\overline{g(z)}-d_f)\widetilde\varTheta(z)=
\frac1{z}\Bigl(h(z)\frac{z-a}{1-\overline{a}z}-d_f\widetilde\varTheta(z)\Bigr).
$$
By the choice of $d_f$ the function in brackets belongs to $H^2$ and vanishes at 0,
and hence the whole expression coincides with boundary values of some $H^2$-function.
It remains to set $c_f=\overline{d_f}$.
\end{proof}


\section{Proof of the main result}

Without loss of generality we assume that $\mu$ is a non-trivial continuous singular measure
supported by a closed set of finite entropy $E\subset I_0=[1, e^{i\alpha}]$, $\alpha\in(0,\pi/2]$, and
$1\in\supp(\mu)$. 

In what follows symbols $\mu_j$ denote different singular measures
supported by closed sets $E_j$. By $S_j$ we denote the singular inner functions
generated by $\mu_j$.

\medskip\noindent
\textbf{Step 1.} Fix the numbers $\beta,\varepsilon,M$ from Lemma~\ref{key}. By Lemma~\ref{spl} we can choose
an open arc $I$ with endpoint $1$ such that $\mu(I)\le1/4$ and
$$
|I|\le\Ent\nolimits_{I}(E\cap I)\le\varepsilon\frac{\mu(I)}{4M^2},
$$
Set $\mu_1:=M^{-2}\mu|_I$ and denote by $S_1$ the corresponding singular inner
function. Note that $S_1^{M^2}$ is a divisor of our initial function $S_\mu$.
Later on, we will construct a univalent function inside $K_{S_1^{M^2}}$.

\medskip\noindent
\textbf{\bf Step 2.} We will now apply a conformal map to obtain from $\mu_1$
a probability measure whose entropy is much smaller than the mass (this
enables us to apply the key Lemma~\ref{key}).

By Lemma~\ref{mob}, we can choose $r\in(0,1)$ in such a way that
the singular
measure $\mu_2$ corresponding to the function $S_2=S_1\circ\varphi_{-r}$ has mass $1$ on
$\mathbb T$.
Then
$$
\frac{1-r}{\mu_1(\mathbb T)}=M^2\frac{1-r}{\mu(I)}\in [1, 3].
$$
Furthermore, by Lemma~\ref{mob} we have $\supp(\mu_2)=E_2\subset I_2=[1,e^{i\gamma}]$
for some $\gamma>0$, with
$$
\Ent\nolimits_{I_2}(E_2)\le \varepsilon.
$$

\medskip\noindent
\textbf{Step 3.} 
Let $\mu_3$ be the measure with support on the
arc $I_3=[1,e^{i\gamma/M}]$ and defined by
$$
\mu_3(A)=\mu_2(\{e^{iM\theta}\colon e^{i\theta}\in A\}), \qquad A\subset I_3.
$$
Note that we still have
$$
\mu_3(\mathbb T)=1, \qquad\text{and}\quad \Ent\nolimits_{I_3}(E_3)\le \varepsilon,
$$
For the corresponding
estimate of the local entropy note that $t\mapsto t\log\frac{1}{t}$ is an
increasing function on $(0,e^{-1})$.

\medskip\noindent
\textbf{Step 4.} We are now in a position to apply Lemma~\ref{key} to $\mu_3$
and the corresponding model space $K_{S_3}$: there exists a bounded function
$f(z)=\sum_{n\ge0}c_nz^n\in K_{S_3}$ such that
$$
f'(0)\ge\beta
$$
and
\begin{equation}
\label{kk1}
\sum_{j=1}^\infty|c_{Mj+1}|(Mj+1)<\beta.
\end{equation}

Next we use the symmetrization trick whose application in a similar problem
was suggested by  M.~Putinar and H.~Shapiro \cite{ps} (it was subsequently
used in \cite{bf}). Take $\omega_M=e^{2\pi{}i/M}$ and consider the bounded analytic function
$$
\widetilde{f}(z)=
\frac{1}{M}\sum_{k=0}^{M-1}\overline\omega_M^k f(\omega_M^k z)=
\sum_{j=0}^{\infty}c_{Mj+1}z^{Mj+1}.
$$
Condition \eqref{kk1} guarantees that $\widetilde{f}$ is univalent in $\mathbb
D$ since $\mathop{\mathrm{Re}}\widetilde{f}'>0$ in $\mathbb D$. The function
$\widetilde{f}$ is no longer in $K_{S_3}$ but it belongs to $K_{S_4}$, where
$S_4$ is the singular inner function given by
$$
S_4(z)=\prod_{k=0}^{M-1}S_3(\omega_M^k z).
$$
It is associated with the measure $\mu_4$, which is the periodic expansion of
$\mu_3$ on the whole circle.

\medskip\noindent
\textbf{Step 5.} Now we apply a desymmetrization procedure as in \cite{bf}.
We have $\widetilde{f}(\omega_Mz)=\omega_Mf(z)$. Therefore, the function
$\check{f}(z)=\big(\widetilde{f}(z^{1/M})\big)^M$ is correctly defined in
$\mathbb D$ (does not depend on the choice of the branch of $z^{1/M}$) and is
also bounded and univalent in $\mathbb D$. A straightforward computation
shows that $\check{f}$ belongs to the space $K_{S_5}$ where $S_5(z) =
\big(S_4(z^{1/M})\big)^M$ (see \cite[Lemma~3]{bf}). Moreover, it is easy to
see that
$$
S_5(z)=
\exp\Big(-M^2\int_{\mathbb T}\frac{\zeta^M+z}{\zeta^M-z}\,d\mu_3(\zeta)\Big)=
\exp\Big(-M^2\int_{\mathbb T}\frac{\xi+z}{\xi-z}\,d\mu_2(\xi)\Big)
$$
and hence $S_5=S_2^{M^2}$. Thus, $\check{f}\in K_{S_2^{M^2}}$.

\medskip\noindent
\textbf{Step 6.} Our last step is the application of the conformal map
$\varphi_r$ which is inverse to $\varphi_{-r}$. Note that
$S_2^{M^2}\circ\varphi_r=S_1^{M^2}$. By Lemma~\ref{conf} there exists a
complex number $c$ such that $g=\check{f}\circ\varphi_r-c$ is in $K_{S_1^{M^2}}
\subset K_{S_\mu}$. It is clear that $g$ is bounded and univalent in $\mathbb
D$. \qed

\section{A short proof of Theorem~\ref{mainth}}

In this section we give another proof of Theorem~\ref{mainth} which is much
more straightforward than the proof given in the previous section.

We need to verify that for any singular inner function $S=S_\mu$ such that
$\supp(\mu)$ is a Carleson set on $\mathbb T$ there exists a univalent
function $f\in K_S$.

We start with taking an arbitrary non-trivial function $f_0\in K_S\cap
C^1(\overline{\mathbb D})$ which exists in view of \cite{dk}.

Notice that
$$
f_0(z)=\frac{(1-S(z))}{2\pi i}\int_{\mathbb
T}\frac{\zeta+z}{\zeta-z}\widetilde{f}(\zeta)\,d\nu(\zeta),
$$
where $\nu$ is the corresponding Clark measure \cite{cla}, and
$\widetilde{f}$ is some function from $L^2(\nu)$. So $f_0$ has analytic
continuation to $\overline{\mathbb C}\setminus\supp(\mu)$. It is easy to see
that $f_0\notin H^{\infty}(\overline{\mathbb C}\setminus\overline{\mathbb
D})$. Indeed, otherwise boundary values of $f_0|_{\mathbb D}$ and
$f_0|_{\mathbb C\setminus\overline{\mathbb D}}$ coincide almost everywhere on
$\mathbb T$, which is clearly impossible whenever $f_0$ is non-constant.

So we can fix $a$, $1<|a|<2$, such that
$$
|f_0(a)|>100(\|f_0\|_{\infty,\mathbb T}+\|f_0'\|_{\infty,\mathbb T}).
$$
Put
$$
f(z)=\frac{1-Af_0(z)}{z-a}, \qquad A=\frac1{f_0(a)}.
$$
We have $f\in K_S$ and it remains to prove that $f$ is univalent in
$\overline{\mathbb D}$. Assume the contrary, i.e. $f(z)=f(w)$ for some $z\ne
w$, $z,w\in\overline{\mathbb D}$. Hence,
$$
1=aA\frac{f_0(w)-f_0(z)}{w-z}+A\frac{wf_0(z)-zf_0(w)}{w-z}=
aA\frac{f_0(w)-f_0(z)}{w-z}-Aw\frac{f_0(w)-f_0(z)}{w-z}+Af_0(w).
$$
It is easy to see that all three summands in right-hand side are bounded from
above by $1/10$. We arrive to a contradiction. $\qed$

\medskip

It is interesting to note that this proof of Theorem \ref{mainth} leads to an
explicit example of a univalent function in $PW_{[0,1]}$. It is easy to see
that function
$$f(z)=\frac{10(e^{iz}-1)-iz(e^{10}-1)}{z(z+10i)}$$
is univalent in $\mathbb{C}^+$ and $f\in PW_{[0,1]}$.

\section{Final remarks}

Among interesting problems concerning Nevanlinna domains one ought to
emphasize the question about possible irregularity of boundaries of
Nevanlinna domains. Several examples of Nevanlinna domains with sufficiently
irregular boundaries are known (see, for instance, \cite{bf, fed06, maz15}).
In particular, an example of a Jordan Nevanlinna domain with nonrectifiable
boundary was constructed in \cite{maz15}. All these examples are associated
with model spaces generated by Blaschke products and it seems interesting to
find similar examples in the case of singular inner functions.

Finally, let us remark that some quantitative properties of univalent rational functions
(i.e., elements of $K_\Theta$ where $\Theta$ is a finite Blaschke product) were studied in \cite{toapp},
where estimates on the length of the boundary of $r(\mathbb D)$ are given in terms
of  the degree of the rational function $r$.

\subsubsection*{Acknowledgments} A part of this work was done in January--February 2017
during a ``research in pairs'' meeting at the CIRM (Luminy), France. Another
part of this work was done during the Simons Semester ``Emergent trends of
Complex Analysis and Functional Analysis'' hosted by IMPAN, Poland. The
authors thank the CIRM and IMPAN for their hospitality.

\vskip -0.75em

\end{document}